%%%%%% 
%% 

\documentclass[12pt,leqno,twoside]{amsart}
%\usepackage{amssymb}
%\usepackage{amsfonts,color, soul}
%\usepackage{latexsym}
% command \sloppy  keeps lines in normal lenghth

\usepackage[latin1]{inputenc}
\usepackage[T1]{fontenc}
\usepackage[backref,colorlinks=true, pdfstartview=FitV, linkcolor=blue, citecolor=blue, urlcolor=blue]{hyperref}

\usepackage{amstext,amsmath,amscd, bezier,indentfirst,amsthm,amsgen,enumerate, geometry}
\usepackage[all,knot,arc,import,poly]{xy}
\usepackage{amsfonts,color, soul}  %\st{} for overstriking
\usepackage{amssymb}
\usepackage{latexsym}
\usepackage{epsfig}
\usepackage{graphicx}
\usepackage{srcltx}
\usepackage{enumitem, comment}

\topmargin 0cm  %0cm     % 0.75
\headsep 1cm        %0
\headheight 0cm    %0
\evensidemargin 0.5cm         %0.25cm
\oddsidemargin 0cm  %0.25cm   %0.25cm                    %-0.25cm
 \textwidth 16cm                                %16.5cm
 \textheight 22cm      %21.5

\unitlength 1mm        % please do not change \unitlength

% THEOREM Environments ---------------------------------------------------
\newtheorem{theorem}{Theorem}[section]
\newtheorem{corollary}[theorem]{Corollary}
\newtheorem{lemma}[theorem]{Lemma}
\newtheorem{proposition}[theorem]{Proposition}

\theoremstyle{remark}

\theoremstyle{remark}

\theoremstyle{definition}

\theoremstyle{remark}

\theoremstyle{remark}

\theoremstyle{remark}

%\numberwithin{equation}{section}  % numbers as (1.2) instead of (1) etc
% command \sloppy keeps lines not longer

\renewcommand{\Box}{\square}    %\diamond

\newcommand{\cal}{\mathcal}

%--- A number of `math-words'

%\newcommand{\dim}{\mathop{{\rm{dim}}}}

\renewcommand{\int}{{\mathrm{int}}}
\newcommand{\gen}{{\mathrm{gen}}}
\newcommand{\pol}{{\mathrm{pol}}}

\newcommand{\isol}{{\mathrm{is}}}
\newcommand{\Sing}{{\mathrm{Sing\hspace{1pt} }}}

\newcommand{\mult}{{\mathrm{mult}}}

\newcommand{\grad}{\mathop{\mathrm{grad}}\nolimits}

\newcommand{\rk}{\mathrm{rank\hspace{2pt}}}
\newcommand{\e}{\varepsilon}
\newcommand{\spec}{{\mathrm{spec}}}
\newcommand{\m}{\setminus}
\newcommand{\fin}{\hspace*{\fill}$\Box$\vspace*{2mm}}

%%%%%%ADDED BY DIRK:
\newcommand{\tF}{F^{\pitchfork}}

\newcommand{\tmu}{\mu^{\pitchfork}}
%%\newcommand{\tmu}{\mu^{t}}
%\newcommand{\tmu}{\mu'}

% quite a number of \cal's

\newcommand{\cA}{{\cal A}}

\newcommand{\cH}{{H}}

\newcommand{\cR}{{\cal R}}

% special

\newcommand{\bC}{{\mathbb C}}

\newcommand{\bP}{{\mathbb P}}

\newcommand{\bZ}{{\mathbb Z}}

\newcommand{\bV}{{\mathbb V}}
% bold-faces

% Over-full v-boxes on even pages are due to the \v{c} in author's name
%\vfuzz2pt % Don't report over-full v-boxes if over-edge is small
%%%%%%%%%%%%%%$$$$$$$$$$$$$$$$$$$$$$$$$$$$$$$$$$$$$$$$

%%% ----------------------------------------------------------------------
\begin{document}

\title[Polar degree]
 {Polar degree of hypersurfaces with 1-dimensional singularities}

\author{\sc Dirk Siersma}  

\address{Institute of Mathematics, Utrecht University, PO
Box 80010, \ 3508 TA Utrecht, The Netherlands.}

\email{D.Siersma@uu.nl}

\author{\sc Mihai Tib\u{a}r}

\address{Univ. Lille, CNRS, UMR 8524 - Laboratoire Paul Painlev\'e, F-59000 Lille, France.}

\email{mihai-marius.tibar@univ-lille.fr}

\thanks{The authors thank the Mathematisches Forschungsinstitut Obewolfach for supporting this research project through the Research in Pairs program, and acknowledge the support of the Labex CEMPI grant (ANR-11-LABX-0007-01). }

\subjclass[2010]{32S30, 58K60, 55R55, 32S50}

\date{\today}

\keywords{singular projective hypersurfaces, polar degree, bounds, 1-dimensional singularities.}

%\date{\mbox{}} 

%\dedicatory{}

%\commby{}

%%% ----------------------------------------------------------------------

\begin{abstract}
 We prove a formula for the polar degree of  projective hypersurfaces in terms of the Milnor data of the singularities, extending to 1-dimensional singularities the Dimca-Papadima result for isolated singularities.  We discuss the semi-continuity of the polar degree in deformations, and we classify the homaloidal cubic surfaces with 1-dimensional singular locus. Some open questions are pointed out along the way.
 \end{abstract}

%

%%% ---------------------------------------------------
\maketitle
%%% ---------------------------------------------------

\setcounter{section}{0}

\sloppy

\section{introduction}\label{s:introd}
%%%%%%%%%%%%%%%%%%%%%%%%%%%%%%%%%%%%%%%%%%%%%%%%%%%%%%%%%%%%%%%

For any projective hypersurface $V \subset \bP^{n}$, defined by a homogeneous polynomial $f : \bC^{n+1} \to \bC$ of degree $d$,  the \emph{polar degree} is defined as the topological degree of the gradient map, also known as the Gauss map:
\begin{equation}\label{eq:grad}
\grad f : \bP^{n}\m \Sing (V) \to  \bP^{n}.
\end{equation}

%in coordinates $(x_{n}, \ldots , x_{n})$. 
%Thus $\pol(V) := \# (\grad f)^{-1}(l)$ for any generic point $l\in \check \bP^{n}$.
%%%%%%%%%%%%%%%%%%%%%%%%%%%%%%%%%%
The polar degree depends only on $V$  and not on the defining polynomial $f$, as conjectured by Dolgachev \cite{Do} and proved by Dimca and Papadima in \cite{DP}. One therefore denotes it by $\pol (V)$.

 The concept of polar degree goes back to 1851 when Hesse  studied hypersurfaces with vanishing Hessian  \cite{Hes51, Hes59},  which is equivalent to $\pol(V) =0$,  and to Gordon and Noether \cite{GN76} (see also \S\ref{ss:notcone} for more comments).

\smallskip

The gradient maps \eqref{eq:grad} of polar degree equal to $1$ are examples of \emph{Cremona transformations} since they are birational maps.  The corresponding hypersurfaces $V$ were called \emph{homaloidal}, and Dolgachev \cite{Do} found the list of all (reduced) projective plane curves.

\smallskip

In the beginning of the 2000's, whereas the algebraic approach was dominant before that date, Dimca and Papadima \cite{DP} gave the following topological interpretation:
\emph{For any projective hypersurface $V$, if $\cH$ is a general hyperplane  with respect to $V$,  then the homology of the affine part $H_{n-1}(V\m \cH)$
 is concentrated in dimension $n-1$, and: 
\begin{equation}\label{eq:red}
  \pol(V) = \rk H_{n-1}(V\m \cH).
\end{equation}}

The classification of all homaloidal hypersurfaces with \emph{isolated singularities} was carried out by Huh \cite{Huh} and
confirmed a conjecture stated by Dimca and Papadima \cite{DP, Di2} that there are no  homaloidal hypersurfaces with isolated singularities  besides the smooth quadric and the plane curves found by Dolgachev. Huh  \cite{Huh} proves and uses the bound:
\begin{equation}\label{eq:huhbound}
\pol(V) \ge  \mu^{\left< n-2\right> }_p(V) : = \mu_p(V \cap \cH),
\end{equation}
where the Milnor number $ \mu_p(V \cap \cH)$ is $>0$ as soon as $p\in \Sing V$.  This holds at any $p$ such that $V$ is not a cone of apex $p$. 

 More recently,  the authors together with  Steenbrink  classified in \cite{SST} the hypersurfaces with isolated singularities and polar degree 2, confirming Huh's conjectural list \cite{Huh}.  The finiteness of the range of $(n,d)$ in which there may exist hypersurfaces with isolated singularities and polar degree $k>2$ has been also proved in \cite{SST}.

 \medskip
 
  Still for isolated singularities, Dimca and Papadima \cite{DP}  had shown the formula: 
  \begin{equation}\label{eq:polmu}
  \pol(V) = (d-1)^{n} - \sum_{p\in \Sing V} \mu_{p}(V).
\end{equation}
 which allows to compute the polar degree in terms of the Milnor data of the singular points.
 
 \medskip

In this paper we  consider hypersurfaces with 1-dimensional singular locus. We extend the formula \eqref{eq:polmu}  and  compute the polar degree from the Milnor data of the singularities (Theorem  \ref{t:dimsing1}):
\begin{equation*} %\label{eq:dpgen}
 \pol (V) = (d-1)^{n} -  \sum_{p\in \Sigma^{\isol}}\mu_{p} -\sum_{i=1}^{r} c_i  \mu_{i}^{\pitchfork} + (-1)^{n}\sum_{q\in Q} (\chi(\cA_{q}) - 1) 
\end{equation*}
by using the study of the  hypersurfaces with 1-dimensional singular locus in  \cite{ST-vhom} and in earlier papers, see \cite{Si-cambridge}. The notations are explained in \S\ref{s:DP1dim}.

\medskip

We use the semi-continuity of the polar degree in deformations (Proposition  \ref{t:polsemi-cont}) in order
to compare the polar degree of V  with 1-dimensional singularity  with its deformation $V_{t}$ (of Yomdin type) to a hypersurface with isolated singularities  (Corollary \ref{c:yomdin}). Then $\pol(V_{t})$  may  serve as upper bound  for the polar degree of $V$.
As a consequence, we derive a Lefschetz type inequality for the slicing with a generic hyperplane $\cH_{gen}$:
\begin{equation*}%\label{eq:polslicing}
\pol(V) \le (d-1) \pol ( V \cap \cH_{gen}).
\end{equation*}

%We derive lower bounds for the polar degree by using the quantization of polar degree, which we developed in any dimension % in \cite{ST-polcycles}.

We highlight the concept of \emph{special point} of a hypersurface introduced in \cite{ST-polcycles}. These are points $p$ where the complex link of $(V,p)$ is non-trivial, it appears that they are finitely many, and they provide lower bounds for the polar degree. We show that, in the 1-dimensional singularity case,  one can detect them by the  Milnor number jump of the transversal singularity type. 

We  treat cubic surfaces in \S\ref{s:examples}. We compute all polar degrees in a topological way  and prove that there are 3 homaloidal cubic surfaces with non-isolated singularities. 

   We discuss homaloidal hypersurfaces with 1-dimensional singularities and transversal type $A_1$,  and we state and discuss the question of the existence of hypersurfaces with a 1-dimensional singularities and polar degree equal to zero, in this manner coming back to Hesse's problem cited in the beginning.

   %%%%%%%%
%%%%%%%%%%%%%%%%%%%%%
\section{Formula for $\pol(V)$ in case of 1-dimensional singularities}\label{s:DP1dim}
%%%%%%%%%%%%%%%%%%%%%%%%%%

Let  $V\subset \bP^{n}$  be a projective hypersurface with singular locus $\Sing (V)$ of dimension $\le 1$ and 
 $\Sing (V) = \Sigma^{c} \cup \Sigma^{\isol}$, where $\Sigma^{c}$ is a non-degenerate curve with irreducible components $\Sigma^{c}_{i}$, $i=1, \ldots, r$, and  $\Sigma^{\isol}$ is the finite set of isolated singularities.

  Each curve branch $\Sigma^{c}_i$ of $\Sing (V)$ has a generic transversal type, of transversal Milnor fibre $\tF_i$ and Milnor number denoted by $\tmu_i$.

   Each branch  $\Sigma^{c}_i$ contains a finite set $Q_i$ of points where the transversal type is not the generic one, and which we have called  \emph{special points} (see also \S\ref{ss:specialp} for a more general definition). We denote by $\cA_q$ the local Milnor fibre of the hypersurface germ $(V, q)$ for $q\in Q := \cup_{i=1}^{r}Q_i$, and by   $\tilde \Sigma^{c}_i$   the normalisation of  $\Sigma^{c}_i$.
   
   At each point $q\in Q_i$ there are finitely many locally irreducible branches of the germ $(\Sigma^{c}_i, q)$,  we denote by $\gamma_{i,q}$ their number and let $\gamma_{i} := \sum_{q\in Q_i}\gamma_{i,q}$. In other words, $\gamma_{i}$ is equal to the number of ``punctures'' in $\tilde \Sigma_i \setminus \tilde Q$.

 This setting has been studied in \cite{ST-vhom}, in particular a formula for  $\chi (V)$ has been found. It depends on the  isolated singularities of $V$, on the special nonisolated singularities,  on the topology of the curve components of $\Sing (V)$, and on the transversal singularity type of each such curve component.
 
Under the above notations, our following formula  generalises the Dimca-Papadima formula \eqref{eq:polmu} for isolated singularities to the case of a 1-dimensional singular set. 
%This makes it possible to compute the polar degree in terms of  (topological) invariants of those types of singularities. 
%We heavely support on  \cite{ST-vhom}.

\begin{theorem}\label{t:dimsing1} Let $V\subset \bP^{n}$ be a hypersurface of degree $d$ with a 1-dimensional singular set. Then:
\begin{equation}\label{eq:dpgen}
 \pol (V) = (d-1)^{n} -  \sum_{p\in \Sigma^{\isol}}\mu_{p} -\sum_{i=1}^{r} c_i  \mu_{i}^{\pitchfork} + (-1)^{n}\sum_{q\in Q} (\chi(\cA_{q}) - 1) 
\end{equation}
where $c_i =  2g_{i}  +\gamma_{i} +(d+1)\deg \Sigma^{c}_{i}-2$, where
$g_i$ is the genus of the normalization $\tilde \Sigma^{c}_i$ of $\Sigma^{c}_{i}$, and where
%$\gamma_i := \sum_{q\in Q_i}\# S_q$,  where the set $S_q$ is indexing the local branches of $\Sigma^{c}_i$ at $q\in Q_i$\\
%$\gamma_i$  the number of special points on $\Sigma^{c}_{i}$\\
$\deg \Sigma^{c}_{i}$ denotes the degree of $\Sigma^{c}_{i}$ as a reduced curve.
\end{theorem}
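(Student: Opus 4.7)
The plan is to translate the polar degree into Euler characteristics via the Dimca--Papadima identity \eqref{eq:red} and then to compute $\chi(V)$ (from the formula previously established in \cite{ST-vhom}) and $\chi(V\cap\cH)$ (by the classical isolated-singularity case). First I fix a hyperplane $\cH\subset\bP^n$ generic with respect to $V$, transversal to every stratum of a Whitney stratification of $V$: in particular $\cH$ avoids $\Sigma^{\isol}$ and $Q$, and meets each curve branch $\Sigma_i^c$ transversally in $\deg\Sigma_i^c$ smooth points at each of which the germ $(V,\cdot)$ factors as the generic transversal singularity (of Milnor number $\tmu_i$) crossed with a line. Because \cite{DP} asserts that the reduced homology of $V\setminus\cH$ is concentrated in degree $n-1$, one obtains
\begin{equation*}
\pol(V) \;=\; (-1)^{n-1}\bigl(\chi(V\setminus\cH)-1\bigr) \;=\; (-1)^{n-1}\bigl(\chi(V)-\chi(V\cap\cH)-1\bigr).
\end{equation*}

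Next I would invoke the Euler characteristic formula of \cite{ST-vhom} to express $\chi(V)-\chi(V_0)$, where $V_0$ is a smooth hypersurface of the same degree $d$, as a sum over the strata of $\Sing V$: an isolated-singularity contribution $(-1)^n\mu_p$ at each $p\in\Sigma^{\isol}$; a special-point contribution $-(\chi(\cA_q)-1)$ at each $q\in Q$; and, for each curve branch $\Sigma_i^c$, a ``curve'' contribution collecting the weighted Euler characteristic $\chi(\Sigma_i^c\setminus Q_i)=2-2g_i-\gamma_i$ of the generic part of the curve together with a projective-degree correction proportional to $d\cdot\deg\Sigma_i^c\cdot\tmu_i$ (the latter reflecting the vanishing cycles created by a degree-$d$ deformation meeting $\Sigma_i^c$ transversally in $d\cdot\deg\Sigma_i^c$ points). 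For the slice the computation is direct: $V\cap\cH\subset\bP^{n-1}$ is a hypersurface of degree $d$ whose only singularities are the isolated points $\bigsqcup_i(\cH\cap\Sigma_i^c)$, each of Milnor number $\tmu_i$, so the isolated-singularity formula \eqref{eq:polmu} applied in $\bP^{n-1}$ yields
\begin{equation*}
\chi(V\cap\cH) \;=\; \chi(V_0\cap\cH) \;+\; (-1)^{n-1}\sum_{i=1}^r(\deg\Sigma_i^c)\,\tmu_i.
\end{equation*}

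Finally I would substitute both expressions into the Dimca--Papadima identity, using the smooth baseline $\chi(V_0\setminus\cH) = 1+(-1)^{n-1}(d-1)^n$ (which is \eqref{eq:red} applied to smooth $V_0$, for which $\pol(V_0)=(d-1)^n$), and collect terms. The coefficient of $-\tmu_i$ is then assembled from the contribution $\deg\Sigma_i^c$ supplied by the hyperplane slice, the contribution $d\cdot\deg\Sigma_i^c$ supplied by the projective-degree term in the \cite{ST-vhom} formula, and the curve-topology contribution $-\chi(\Sigma_i^c\setminus Q_i)=2g_i+\gamma_i-2$, giving in total $c_i = (d+1)\deg\Sigma_i^c + 2g_i+\gamma_i-2$. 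The main obstacle I anticipate lies in the careful bookkeeping of this recombination: the projective-degree correction in \cite{ST-vhom} --- the global vanishing-cycle term that clusters $d\cdot\deg\Sigma_i^c$ isolated singularities along each $\Sigma_i^c$ under a generic polynomial perturbation --- must combine with the hyperplane-slice term and the Dimca--Papadima sign $(-1)^{n-1}$ to produce exactly one additional $d\cdot\deg\Sigma_i^c$ unit in $c_i$ (rather than just $1$ from the slice alone or $d$ from the deformation alone). Once the \cite{ST-vhom} formula is on the table, the remainder is straightforward additive algebra.
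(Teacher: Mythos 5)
Your proposal is correct and follows essentially the same route as the paper: both translate $\pol(V)$ into $\chi(V\setminus\cH)$ via the Dimca--Papadima identity, import the $\chi(V)-\chi(V_\e)$ formula from \cite{ST-vhom} (with its axis-point term $\nu_i=d\deg\Sigma^c_i$, which is exactly your ``projective-degree correction''), compute $\chi(V\cap\cH)$ from the isolated transversal singularities $\tmu_i$ at the $\deg\Sigma^c_i$ slice points, and recombine. Your final bookkeeping $c_i=(d+1)\deg\Sigma^c_i+2g_i+\gamma_i-2$ matches the paper's computation.
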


\begin{proof}
The Euler characteristic $\chi(V)$  has been computed in \cite{ST-vhom} by using a local pencil of hypersurfaces $V_\e := \{ f_\e = f + \e h_d = 0\}$ of degree $d= \deg f$, where  $h_d$ is a general homogeneous polynomial.  
  Let $A := \{ f= h_d = 0\}$ denote the axis of the pencil.  
   
   Let
 $\bV_\Delta := \{ (x, \e) \in \bP^{n+1} \times \Delta \mid f + \e h_d = 0\}$ 
denote the total space of the pencil, where $V_0 := V \subset \bP^{n} \times \{ 0\}$ and $\Delta$ is a small enough disk centred at $0\in \bC$ such that $V_\e$ is nonsingular for all $\e \in \Delta^*$.  The existence of small enough disks $\Delta$
is ensured\footnote{see e.g. \cite[Prop. 2.2]{ST-bettimax} for a detailed explanation.} by the genericity of $h_d$.
 Note that $\bV_\Delta$ retracts to $V$.
  % and let $\pi : \bV_\Delta \to \Delta$ denote the projection. 
 
\smallskip

%One may apply to \eqref{eq:pollink}  
We have introduced in \cite{ST-vhom} the \emph{vanishing homology}\footnote{For the  treatment of the  general singular setting by using sheaf cohomology, see  the more recent paper \cite{MPT}.} of projective hypersurfaces $V$ with 
 $\dim \Sing (V) =1$,  defined as:
\[  H^{\curlyvee}_{*}(V) := H_*(\bV_\Delta, V_\e;\bZ)\]
 and in its study we have established the following Euler characteristic formula for $\chi(\bV_\Delta, V_\e)$ which equals $\chi(V) - \chi(V_\e)$, since  $\bV_\Delta$ retracts to $V$:

\[ \chi(V) - \chi(V_{\e}) = (-1)^{n} \sum_{i=1}^{r} (2g_{i} +\gamma_{i} +\nu_{i} - 2) \mu_{i}^{\pitchfork} - \sum_{q\in Q} (\chi(\cA_{q}) - 1) + (-1) ^{n} \sum_{p\in \Sigma^{\isol}}\mu_{p}
\] 
where $\nu_i := \# A\cap \Sigma^{c}_i$ is the number of axis points, namely $\nu_i = \int(\{h_{d}=0\},\Sigma^{c}_{i}) = d  \deg \Sigma^{c}_{i}$.

Since $V_\e$ is a nonsingular hypersurface in $\bP^{n}$, its Euler characteristic is that of the smooth hypersurface of degree $d$  in $\bP^{n}$, namely:
$$\chi^{n,d} :=
 n+1 - \frac{1}{d} [ 1 + (-1)^{n} (d-1)^{n+1} ].$$

Let now $H$ be a generic hyperplane with respect to the canonical Whitney stratification of $V$. 
Since  $V \cap H$ has only isolated singularities, one has the well-known formula:
%where $\nu_i$ is $d$ times the intersection number between $\Sigma_i^c$ and $H$.

\[ \chi (V \cap H) = \chi^{n-1,d} + (-1)^{n-1} \sum_{a \in \Sing (V \cap H)} \mu_a(V \cap H),\]
where $\Sing (V \cap H) = \Sigma^{c} \cap H$.
The number $\# \Sigma^{c} \cap H$ of intersection points 
is  then $\sum_{i=1}^{r}\deg \Sigma^{c}_{i}$, and $\mu_a(V \cap H)$, for  $a\in \Sigma^{c}_{i}$, is precisely the transversal Milnor number   $\mu_{i}^{\pitchfork}$.

To prove our claimed formula for $\pol(V)$ we use the above formulas and the  relation  \eqref{eq:red} of \cite{DP} in the form:
 \[\chi (V \setminus H) = 1 + (-1)^{n-1} \pol (V)\]
where $\chi (V \setminus H) = \chi (V) - \chi (V \cap H)$ by the additivity of $\chi$. 
\end{proof}

%%%%%%%%%%%%%Begin:  Computational examples of ext DP formula  %%%%%%%%%%

The above formula is useful as soon as the local topological information is available. We show this is some examples
taken from \cite{ST-vhom}.

\smallskip
\noindent
\cite[Ex 7.1a]{ST-vhom}: $V := \{ x^2z + y^2 w = 0\} \subset \bP^3$.  Then $\Sing(V)$ is a projective line with  transversal type $A_1$ and with two special points of type $D_{\infty}$, the Milnor fibre of which is homotopy equivalent to $S^2$. Applying formula \eqref{eq:dpgen} we  get  $\pol(V) = 8 - 0 - (0+2+4-2) - 2 = 2$.

\smallskip
\noindent
\cite[Ex 7.1b]{ST-vhom}: $V := \{x^2z + y^2 w + t^3= 0\} \subset\bP^4$.   Then $\Sing(V)= \bP^1$  is also a projective line but here with  transversal type $A_2$ and with two special points having Milnor fibre $S^3 \vee S^3$. From  formula  \eqref{eq:dpgen} we  get   $\pol(V) = 16 - 0 - (0+2+4-2) \cdot 2 - 4 = 4$.
%\medskip
%\begin{color}{red} We can also add Ex 7.2 and Ex 7.3 from \cite{ST-vhom} but this will probably become too much.
%\end{color}

%%%%%%%%%%%%%%%%%%%%%%%%%%
\section{Special points and lower bounds}\label{s:lowerbound}\label{ss:decomp}
%%%%%%%%%%%%%%%%%%%%%%%%%%

If one looks for hypersurfaces with small polar degree, for instance homaloidal, it is useful to have lower bounds for the polar degree in terms of the singularities of $V$ or its dual. An example is Huh's bound \eqref{eq:huhbound} for isolated singularities,
which has been used in \cite{Huh} and in \cite{SST} to determine all hypersurfaces with isolated singularities 
which have polar degree 1 or 2.  A more general bound has been found in \cite{ST-polcycles}.  

%%%%%%%%%%%%
\subsection{Quantisation of the polar degree, after \cite{ST-polcycles}}
 
Let us fix a Whitney stratification $\cal W$ of $V \subset \bP^n$. This depends only on the reduced structure of $V$. 
We consider hyperplanes $\cH$ which are stratified transversal to all strata of $V$, except at finitely many  points.
One says that the hyperplane $H= \{ l=0\}$ is \emph{admissible} iff its non-transversality locus consists of isolated points only, and if  the affine polar locus $\Gamma(l, f)$ has dimension $\le 1$.

If $\cH$ has an isolated non-transversality at $q \in V$ then the linear function $l : \bC^{n} \to \bC$ defining $\cH$  near $q$ has a stratified  isolated singularity at $q$.
 Consequently, it's local Milnor-L\^e fibre  $B_{\e}\cap V \cap \{l=s\}$, for some $s$ close enough to $l(q)$, has the homotopy type of a bouquet of spheres of dimension $n-2$, cf L\^{e}'s  results \cite{Le}. 
 %, extended by the general Bouquet theorem \cite{Ti}. 
We denote by $\alpha_q(V, \cH)$ it's Milnor-L\^e number. In case $H = \cH_{\gen}$ is a general hyperplane through $q$, then  $\alpha_q(V,\cH_{\gen})$ is the Milnor number of the \emph{complex link} of  $V$ at $q$, denoted 
 by $\alpha_q(V)$, and we have  $\alpha_q(V,\cH) \ge \alpha_q(V)$.

Let us denote by $\alpha(V,\cH)$ the sum $\sum_{q}\alpha_q(V, \cH)$ for all points $p$ of the  non-transversality locus of the admissible hyperplane $H$.
We have shown in \cite{ST-polcycles}:
\begin{theorem}\cite[Theorem 5.4]{ST-polcycles}\label{t:quantis}
  If $H$ is an admissible hyperplane,  then there is the following decomposition of the polar degree: 
$$\pol (V) = \alpha(V,\cH) + \beta(V,\cH)$$
 where $\beta(V,\cH)$ is related to the vanishing cycles of  the affine polynomial map which is the restriction of  $f: \bC^{n+1} \to \bC$ to the affine hyperplane plane $l=1$. 
 \fin
\end{theorem}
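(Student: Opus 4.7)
The plan is to translate everything to the affine polynomial $g := f|_{\{l=1\}} \colon \bC^{n} \to \bC$, whose zero fibre is $V\setminus\cH$, and to decompose the Betti number $b_{n-1}(V\setminus\cH)$ into a contribution localised at the non-transversality points of $\cH$ on $V$ (giving $\alpha(V,\cH)$) and a contribution detected by the affine vanishing cycles of $g$ away from those points (giving $\beta(V,\cH)$). The bridge to the polar degree is the identity \eqref{eq:red} of Dimca-Papadima, which has to be promoted from generic to admissible hyperplanes.

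The first step is to identify $b_{n-1}(V\setminus\cH)$ with $\pol(V)$ even for admissible $\cH$. For truly generic $\cH$ this is \eqref{eq:red}. For an admissible $\cH$ one may embed it in a pencil $\{\cH_{s}\}_{s\in\bP^{1}}$ whose generic member is transversal and invoke the topological invariance of $\pol(V)$, together with an explicit tracking of Euler-characteristic jumps of $V\setminus\cH_{s}$, to conclude that $H_{*}(V\setminus\cH)$ remains concentrated in degree $n-1$ and that its rank is still $\pol(V)$. The admissibility hypothesis $\dim\Gamma(l,f)\le 1$ is precisely what makes this deformation non-degenerate.

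The second, central step is the decomposition of $b_{n-1}(g^{-1}(0))$. I would compare $g^{-1}(0)$ with a generic fibre $F_{\gen} := g^{-1}(t_{\gen})$ via a Mayer-Vietoris / carousel decomposition: remove from $g^{-1}(D)$, for a small disc $D\subset\bC$ centred at $0$, stratified Milnor tubes $T_{q}$ around each isolated non-transversality point $q$ of $\cH$ with $V$; on the complement $g^{-1}(D)\setminus\bigcup_{q}T_{q}$, the map $g$ is topologically locally trivial, so the contribution to $\chi(g^{-1}(0))-\chi(F_{\gen})$ coming from there is supplied purely by affine vanishing cycles of $g$ at finite critical values over $0$, and this is what defines $\beta(V,\cH)$. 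Each local piece $T_{q}\cap V\cap\{l=s\}$ is, by L\^{e}'s bouquet theorem, homotopy equivalent to a wedge of $\alpha_{q}(V,\cH)$ spheres of dimension $n-2$, and gluing those into the affine hypersurface $V\setminus\cH$ produces exactly $\alpha_{q}(V,\cH)$ independent $(n-1)$-cycles, summing up to $\alpha(V,\cH)$.

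The main obstacle is the \emph{clean separation} step: one must show that the affine vanishing cycles of $g$ produced in the complement and the local Milnor-L\^{e} cycles created at the non-transversality points are linearly independent in $H_{n-1}(V\setminus\cH;\bZ)$ and that together they exhaust it. Admissibility is essential here, since $\dim\Gamma(l,f)\le 1$ forces the polar curve to split into finitely many branches that either collapse onto the points $q$ or stay at finite distance; the first type carries the $\alpha$-contribution and the second carries the $\beta$-contribution, with no overlap. A secondary technical point is sign bookkeeping: verifying that the local Milnor-L\^{e} contribution at each $q$ enters $\chi(V\setminus\cH)$ with the same sign as the affine vanishing cycles, which is a standard comparison of the Euler characteristic of a cone on the local Milnor-L\^{e} fibre with the global generic fibre of $g$.
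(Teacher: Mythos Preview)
The present paper does not contain a proof of this theorem: the statement is quoted verbatim from \cite[Theorem 5.4]{ST-polcycles} and is closed immediately with the \verb|\fin| box. There is therefore nothing in this paper to compare your attempt against; any comparison would have to be made with the cited reference, not with the paper under review.

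That said, your outline is a plausible reconstruction of how such a result is typically proved: one passes to the affine restriction $g=f|_{\{l=1\}}$, uses the Dimca--Papadima identification of $\pol(V)$ with the top Betti number of $V\setminus\cH$, and splits the vanishing cycles of $g$ along the polar curve $\Gamma(l,f)$ into a part concentrated at the isolated non-transversality points of $\cH$ (yielding $\alpha$) and a part supported away from them (yielding $\beta$). Your identification of the ``clean separation'' as the crux is accurate: this is exactly where the admissibility hypothesis $\dim\Gamma(l,f)\le 1$ does the work, and it is the part that requires genuine argument rather than bookkeeping. The step you describe as ``promoting \eqref{eq:red} from generic to admissible hyperplanes'' is also nontrivial and is not just an Euler-characteristic tracking---one needs the concentration of homology in top degree to persist, which again hinges on the polar curve condition. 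As written, your proposal is a coherent strategy sketch rather than a proof, but it is pointed in the right direction; for the actual details you should consult \cite{ST-polcycles} directly.
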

 
%%%%%%%%%%%%%%%%%%%%
%\subsection{The case of hypersurfaces $V$ with 1-dimensional singular locus}
Let us consider now the special case of hypersurfaces $V$ with 1-dimensional singular locus.
In this case, the numbers $\alpha_q(V,H)$ can be described by sectional Milnor numbers, as follows:
\begin{itemize}
\item[(i)]
For isolated singular points $p \in V$ we have $\alpha_p(V,\cH) = \mu_p(V \cap \cH)$ by definition.

\item[(ii)]
For points $p$ on the 1-dimensional singular set, we have the formula proved in \cite{ST-gendefo}: %%% ST-exchange}) 
\begin{equation}\label{eq:mdefect}
\alpha_p(V,\cH)  =   \mu(V \cap \cH, p)  -  \sum_{j} \mu(V \cap \cH_s,p_j),
\end{equation}
where the points $p_{j}$ are the singular points of the restriction  ${f_p}_{|H_{s}}$ in $B_{\e}\cap V\cap H_s$,
where  $f_p =0$ is  a reduced local equation for $V$. 
The number  $\alpha_p(V,\cH)$ was called the \emph{Milnor number jump} at $p$ for the family of functions ${f_p}_{|\cH_{s}}$. Even if $\cH_{0}$ may be not the most generic at $p$,  for $s\not= 0$ the hyperplane $\cH_{s}$ is a locally generic slice of  any branch $\Sigma^c_i$, and so the above equality  reads:
\begin{equation}\label{eq:mdefect2}
\alpha_p(V,\cH)  =   \mu(V \cap \cH, p)  -  \sum_{i} \mult_{p} \Sigma^c_{i} \cdot \mu_{i}^{\pitchfork},
\end{equation}
where $\mu_{i}^{\pitchfork}$ is the generic transversal Milnor number and does not depend on the choice of the point of the irreducible component $\Sigma^c_{i}$.  Here the sum is taken over all local branches at $p$.
\end{itemize}

%%%%%%%
\subsection{The special points of $V$}\label{ss:specialp}
For any singular projective hypersurface $V$, we say after \cite{ST-polcycles} that  $p \in V$  a {\em special point of V} if $\alpha_p(V) > 0$. It turns out  that the set of special points $V_{\spec}$ is finite.

 In case  $\dim_{p} \Sing (V) =1$, the set of special points of $V$  consists of the isolated singularities of $V$  together with the set $Q$ of points $p$  where the generic transversal Milnor number is jumping (see  \S\ref{s:DP1dim}), which is equivalent to the inequality $\alpha_p(V,\cH) > 0$.
Indeed, it is well-known that $\alpha_p(V,\cH) = 0$ implies that  $\Sing (V)$ is smooth  at $p$  and that $V \cap \cH_s$ is a $\mu$-constant local family of hypersurfaces.  This is equivalent to A'Campo's ``non-splitting principle'' \cite{AC}.

It is known (see \cite[Remark 4.3]{ST-polcycles}) that the set of admissible hyperplanes for $f$ containing a fixed point $p\in \Sing V$ is a Zariski-open subset of the set of all hyperplanes through $p$.  The following useful lower bound then holds in general, as a consequence of Theorem \ref{t:quantis}:

\begin{corollary}\cite[Corollary 6.6]{ST-polcycles}\label{c:admissibleandspecial}
Let $V\subset \bP^{n}$ be a projective hypersurface which is not a cone of apex  $p$.  Then:
\begin{equation}\label{eq:newbounds}
 \pol(V)  \ge \alpha_p(V).
\end{equation}
In particular, if $V$ is not a cone, then:
\begin{equation}\label{eq:newbounds2}
 \pol(V)  \ge \max_{q\in V_{\spec}}\alpha_q(V).
\end{equation}
\fin
\end{corollary}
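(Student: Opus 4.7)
The plan is to apply the decomposition of $\pol(V)$ from Theorem \ref{t:quantis} to a carefully chosen admissible hyperplane through $p$, and then show that both summands are non-negative while the first already dominates $\alpha_p(V)$.

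First, I would invoke the observation quoted from \cite{ST-polcycles} just before the corollary: the set of admissible hyperplanes through a fixed $p\in V$ forms a Zariski-open subset of the linear system of hyperplanes through $p$. The hypothesis that $V$ is not a cone of apex $p$ is precisely what guarantees that this Zariski-open set is nonempty, since otherwise every hyperplane through $p$ would fail the admissibility condition on the affine polar locus. Within this open set, I would refine the choice further so that $\cH$ is generic at $p$ in the sense that its local Milnor--L\^e number realises the minimum, namely
$$\alpha_p(V,\cH) \;=\; \alpha_p(V),$$
which is available since the complex link of $(V,p)$ is, by definition, computed by a generic such slice (cf.\ L\^e's results recalled in \S\ref{ss:decomp}).

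Next I would apply Theorem \ref{t:quantis} to this $\cH$ to obtain
$$\pol(V) \;=\; \alpha(V,\cH) + \beta(V,\cH).$$
The first summand is the sum $\sum_q \alpha_q(V,\cH)$ over all non-transversality points of $\cH$, each term being the rank of the reduced homology of a Milnor--L\^e fibre that is a bouquet of $(n-2)$-spheres, hence non-negative. In particular $\alpha(V,\cH)\ge \alpha_p(V,\cH) = \alpha_p(V)$. For the second summand, $\beta(V,\cH)$ is by construction a count of vanishing cycles of the affine restriction of $f$ to $\{l=1\}$, so it is non-negative as the rank of a homology group. Adding the two inequalities gives $\pol(V)\ge \alpha_p(V)$, which is \eqref{eq:newbounds}. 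The second inequality \eqref{eq:newbounds2} follows by taking the maximum over the finite set $V_{\spec}$ of special points, since the assumption that $V$ is not a cone means that for every $q\in V_{\spec}$ one may run the preceding argument at $q$.

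The main obstacle is the genericity step: one must simultaneously (i) stay inside the Zariski-open set of admissible hyperplanes through $p$, and (ii) arrange $\alpha_p(V,\cH)=\alpha_p(V)$. Both conditions are Zariski-open in the linear system of hyperplanes through $p$, and non-conicity at $p$ ensures the first is nonempty; their intersection is then automatically nonempty, which closes the argument. The non-negativity of $\beta(V,\cH)$ is the other point where one has to be careful, but it is already built into its definition via vanishing homology as set up in \S\ref{s:DP1dim}, so no extra work is required.
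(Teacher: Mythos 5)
Your proof is correct and follows exactly the route the paper intends: the corollary is stated here without proof as a direct consequence of Theorem \ref{t:quantis} (with the details deferred to \cite[Corollary 6.6]{ST-polcycles}), namely one picks a generic admissible hyperplane through $p$ --- available by the quoted Zariski-openness together with the non-conicity hypothesis --- so that $\alpha_p(V,\cH)=\alpha_p(V)$, and then uses the non-negativity of the remaining terms $\alpha_q(V,\cH)$ and of $\beta(V,\cH)$. Nothing essential is missing, and inequality \eqref{eq:newbounds2} follows from \eqref{eq:newbounds} by running the argument at each of the finitely many special points, just as you say.
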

We shall apply this lower bound result in case of homaloidal hypersurfaces, see \S\ref{ss:homaloidal}.

%%%%%%%%%%%%%%%%%%% 
\section{Semi-continuity of $\pol(V)$ under deformations} \label{s:semi}
%%%%%%%%%%%%%%%%%%%%%%  %%%%%%%%

%We will consider here deformations of hypersurfaces of constant degree. 
% The following semi-continuity principle does not involve hyperplanes, however it yields  lower bounds of the polar degree.
The following result is general,  it holds for any singular locus and whatever $\pol(V)$ might be, including the case $\pol(V)= 0$. This could be folklore, but we didn't find a precise reference.
%%%%%%%%%%
\begin{proposition}\label{t:polsemi-cont}
The polar degree is lower semi-continuous in deformations of fixed degree $d$.  More precisely,  if $f_s$ is a deformation of $f_{0} := f$ of constant degree,   then  $\pol (V_s) \ge \pol (V)$ for  $s\in \bC$ close enough to 0,
 where $V_{s} := \{f_{s}=0\}$.
\end{proposition}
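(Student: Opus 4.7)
The plan is to show that a generic point $y \in \bP^n$ has exactly $\pol(V_0)$ regular preimages under $\grad f_0$, all lying in $\bP^n \setminus \Sing V_0$, and that these preimages persist as preimages of $\grad f_s$ for $s$ close to $0$. Combined with the fact that the number of preimages of any point is bounded above by the polar degree, this forces $\pol(V_s) \ge \pol(V_0)$.

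First, by Sard's theorem I fix a regular value $y \in \bP^n$ of $\grad f_0\colon \bP^n \setminus \Sing V_0 \to \bP^n$. Then $(\grad f_0)^{-1}(y) = \{p_1, \ldots, p_N\}$ with $N = \pol(V_0)$; the points $p_i$ are pairwise distinct, lie in $\bP^n \setminus \Sing V_0$, and each differential $d(\grad f_0)_{p_i}$ is invertible. Applying the holomorphic implicit function theorem to the family $(x,s) \mapsto \grad f_s(x)$ at each $(p_i, 0)$ produces holomorphic sections $s \mapsto p_i(s)$ defined for $|s| < \e$, with $p_i(0) = p_i$ and $\grad f_s(p_i(s)) = y$. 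These sections remain pairwise distinct for small $s$ by continuity, and since $\Sing V_s$ is upper semi-continuous in $s$ (any accumulation point of $\Sing V_s$ as $s \to 0$ must lie in $\Sing V_0$, by continuity of the partial derivatives of $f_s$), each $p_i(s)$ stays in $\bP^n \setminus \Sing V_s$ for $|s|$ small. Hence $\#(\grad f_s)^{-1}(y) \ge N = \pol(V_0)$ for $s$ close enough to $0$.

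To finish, I combine this with the upper bound $\#(\grad f_s)^{-1}(y) \le \pol(V_s)$, valid whenever the fiber $(\grad f_s)^{-1}(y) \subset \bP^n \setminus \Sing V_s$ is $0$-dimensional. That is automatic for $s$ near $0$ by upper semi-continuity of fiber dimension applied to the family of maps, starting from $(\grad f_0)^{-1}(y)$ being $0$-dimensional. The inequality itself follows by resolving the indeterminacy of the rational extension of $\grad f_s$ to a proper morphism $\widetilde{\grad f_s}\colon \widetilde{\bP^n_s} \to \bP^n$ of degree $\pol(V_s)$ (for instance by blowing up the ideal $(\partial_0 f_s, \ldots, \partial_n f_s)$, whose zero locus is $\Sing V_s$), together with the standard fact that a finite holomorphic map has at most as many preimages of a point as its degree; preimages of $\widetilde{\grad f_s}$ outside the exceptional divisor correspond bijectively to preimages of $\grad f_s$ in $\bP^n \setminus \Sing V_s$. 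Chaining the two inequalities yields $\pol(V_s) \ge \pol(V_0)$.

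The step I expect to be the most delicate is precisely this universal upper bound for the chosen value $y$, which a priori need not be a regular value of $\grad f_s$ for every small $s$. The resolution-of-indeterminacy argument is what handles this uniformly in $s$; the remainder is a routine implicit-function-theorem deformation of regular preimages, made legitimate by the upper semi-continuity of the singular locus under the deformation of fixed degree.
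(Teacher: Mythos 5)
Your first half coincides with the paper's argument: both proofs fix a regular value $y$ of $\grad f_0$, note that its $\pol(V_0)$ preimages are nondegenerate, and deform them to preimage points of $\grad f_s$ lying outside $\Sing V_s$ (the paper via compact neighborhoods $U_i$ on which $\grad f_s$ remains a diffeomorphism, you via the implicit function theorem --- these are interchangeable). Where you genuinely diverge is the upper-bound step. The paper sidesteps the difficulty you identify as ``the most delicate'' by \emph{moving the target point}: it picks a regular value $b'$ of $\grad f_s$ arbitrarily close to $b=y$, for which the fiber count is exactly $\pol(V_s)$, and observes that each of the shrunken neighborhoods $Z_i$ still captures at least one preimage of $b'$. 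This keeps the whole argument at the level of elementary degree theory. You instead keep $y$ fixed and prove a universal bound $\#(\grad f_s)^{-1}(y)\le \pol(V_s)$ by resolving the indeterminacy and invoking the finiteness theory of proper generically finite maps onto the normal variety $\bP^n$. Your route is heavier but works, and it has the small merit of not needing to re-select a regular value for each $s$.

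One sub-step of your version is not justified as stated: the claim that $(\grad f_s)^{-1}(y)$ is $0$-dimensional for all small $s$ ``by upper semi-continuity of fiber dimension.'' The relevant incidence variety is only proper over the $s$-disc after adjoining the indeterminacy loci $\Sing V_s$, whose contribution to the central fiber can be positive-dimensional; so semi-continuity does not rule out positive-dimensional components of $(\grad f_s)^{-1}(y)$ appearing near $\Sing V_0$ for $s\ne 0$. Fortunately you do not need the whole fiber to be finite: the points $p_i(s)$ are isolated in their fibers (the differential of $\grad f_s$ is invertible there), and the Stein factorization of your resolved map $\widetilde{\grad f_s}$ already shows that the number of \emph{isolated} points of any fiber is at most the degree $\pol(V_s)$, since distinct isolated points lie in distinct connected components of the fiber and hence map to distinct points of a fiber of the finite part. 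With that adjustment the proof is complete.
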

\begin{proof}
Let $b\in \bP^{n}$ be a regular value for $\grad f : \bP^{n}\m \Sing V \to  \bP^{n}$ and let  $(\grad f)^{-1}(b) = \{a_1,\cdots,a_k\}$, $k\ge 0$.
There exist disjoint compact neighborhoods $U_i$ of $a_i$ and $U'$ of $b$ such that $\grad f : (U_i,a_i )\to (U',b)$ is a diffeomorphism. Next take $s$ so close to 0 such that $\grad f_s | U_i$ are still diffeomorphisms, and that $\grad f_s (U_i)$ still contains $b$ in its interior. Let $W = \cap_{i=1}^k \grad f_s(U_i)$ and $Z_i = (\grad f_s)^{-1}(W) \cap U_i$. The restriction 
$ \grad f_s : \bigsqcup_{i} Z_i \to W $
is a diffeomorphism on each component $Z_i$, and has topological degree $\pol(V)$. Moreover $b$ is still a regular value for this restriction, but perhaps not anymore for the full map   $\grad f _s: \bP^{n}\m \Sing V_s \to  \bP^{n}$. Arbitrarily close to $b$ there exist points $b'$ which are  regular values for $\grad f_s$.  Then the  number of counter-images $\#(\grad f_s)^{-1}(b')$  is $\pol (V_s)$ and $(\grad f_s)^{-1}(b')$ contains at least one point in each $Z_i$.  This shows the inequality $\pol (V_s) \ge \pol (V)$.
\end{proof}

\medskip

%%%%%%%%%%%%%%
\subsection*{Yomdin type families and polar degree}\label{ss:scpencil}
%%%%%%%%%%%%%%%%%
%{\color{red} This subsection has some problems and needs more details etc}

Let  $V = \{ f=0\}$ have at most 1-dimensional singularities, and  let us consider the particular deformation\footnote{Known as the \emph{Yomdin series} in the local context.}  $f_s = f +s  l^d$ of degree $d$,  where $H = \{l=0\}$  is a hyperplane such that $H \cap V$  has isolated singularities only.  
By direct computation, one can show that for  generic $s \ne 0$ (actually except of a finite number of values of $s\in \bC$),  the singular locus of $V_s = \{ f_s =0\}$ is the set $H\cap \Sing V$, and therefore $\Sing V_s$ consists of  isolated singular points.
We may then show:
 \smallskip

\begin{corollary}\label{c:yomdin}
If $V$ has at most 1-dimensional singularities, and  if $H_{gen}$ is a generic hyperplane, then:
\begin{equation}\label{eq:polslicing}
\pol(V) \le \pol(V_s) = (d-1) \pol ( V \cap \cH_{gen})
\end{equation}
and
 \begin{equation}\label{eq:condtion1dim}
\sum_i \tilde d_{i}\tmu_i \le (d-1)^{(n-1)},
\end{equation}
where $ \tilde d_{i} :=\int(\Sigma^c_i,\cH_{gen})$ denotes the global intersection number. 
\end{corollary}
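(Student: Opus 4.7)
The plan is to apply Proposition \ref{t:polsemi-cont} to the Yomdin-type pencil $f_s = f + sl^d$ and to compute $\pol(V_s)$ for generic small $s\neq 0$ from the Dimca-Papadima formula \eqref{eq:polmu}; the outcome should match $(d-1)\pol(V\cap\cH_{gen})$ via another application of \eqref{eq:polmu} to the degree-$d$ hypersurface $V\cap\cH_{gen}\subset\cH_{gen}\simeq\bP^{n-1}$, yielding \eqref{eq:polslicing}. The bound \eqref{eq:condtion1dim} will then drop out of the nonnegativity of a polar degree.

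Proposition \ref{t:polsemi-cont} directly gives $\pol(V)\le \pol(V_s)$ for $s$ close to $0$. The direct computation cited in the text preceding the corollary ensures that, for generic $s\neq 0$, the singular locus of $V_s$ is $\cH_{gen}\cap\Sing V$; by genericity of $\cH_{gen}$ this set is disjoint from $\Sigma^{\isol}$ and from the finite set $Q$ of special points, and consists of $\tilde d_i$ smooth transversal intersection points on each component $\Sigma^c_i$. Applying \eqref{eq:polmu} to $V_s$, the key step is the local identification
\[ \mu_p(V_s) = (d-1)\tmu_i \qquad \text{for every } p\in \Sigma^c_i\cap\cH_{gen}, \]
which is an Iomdine-L\^e-Massey type equality at a smooth non-special point of a 1-dimensional singular locus. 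In local coordinates $(x_1,\dots,x_{n-1},t)$ at $p$ in which $\Sigma^c_i$ is the $t$-axis and $l = t$ (after an adapted linear change of the transverse coordinates), the transversal $\mu$-constant family along $\Sigma^c_i$ is topologically trivial by L\^e-Ramanujam (and trivially so when the transversal dimension is $2$), so $f$ can be brought to the form $g(x_1,\dots,x_{n-1})$ with $\mu_0(g)=\tmu_i$; Thom-Sebastiani then yields $\mu_p(g+st^d)=\tmu_i\cdot(d-1)$. I expect the only real obstacle to be the borderline case of transversal dimension $3$ (i.e.\ $n=4$), in which L\^e-Ramanujam is unavailable; this will be overcome by invoking Massey's L\^e-cycle formula for Yomdin series at a generic smooth point of $\Sigma^c_i$, which delivers the same value unconditionally.

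Summing over all intersection points gives $\sum_p \mu_p(V_s)=(d-1)\sum_i \tilde d_i\tmu_i$, whence by \eqref{eq:polmu}
\[ \pol(V_s) = (d-1)^n - (d-1)\sum_{i=1}^r \tilde d_i\tmu_i . \]
On the other hand, applying \eqref{eq:polmu} inside $\cH_{gen}\simeq\bP^{n-1}$ to $V\cap\cH_{gen}$, whose singular points are exactly $\Sigma^c\cap\cH_{gen}$ each of Milnor number $\tmu_i$ by transversality at a smooth non-special point, yields $\pol(V\cap\cH_{gen}) = (d-1)^{n-1} - \sum_{i=1}^r \tilde d_i\tmu_i$. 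Comparing the two expressions gives the equality $\pol(V_s)=(d-1)\pol(V\cap\cH_{gen})$; combined with the semi-continuity step this proves \eqref{eq:polslicing}, while the nonnegativity $\pol(V\cap\cH_{gen})\ge 0$ (as a topological degree, or equivalently by the Dimca-Papadima interpretation \eqref{eq:red}) delivers \eqref{eq:condtion1dim}.
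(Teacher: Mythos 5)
Your proof has the same global skeleton as the paper's: semi-continuity (Proposition \ref{t:polsemi-cont}) gives $\pol(V)\le\pol(V_s)$, everything then reduces to the local equality $\mu_p(V_s)=(d-1)\tmu_i$ at the points $p\in\cH_{gen}\cap\Sigma^c_i$, and the two applications of the Dimca--Papadima formula \eqref{eq:polmu} (to $V_s$ in $\bP^n$ and to $V\cap\cH_{gen}$ in $\bP^{n-1}$) plus $\pol(V\cap\cH_{gen})\ge 0$ finish both claims exactly as in the paper. Where you genuinely diverge is in how you prove the local equality. The paper invokes the Iomdine--L\^e--Yomdin formula \eqref{eq:yomdinmu} together with Lemma \ref{l:yomdin}: since $\cH_{gen}$ is generic, the local polar locus $\Gamma_p(l,f)$ is empty, so the formula holds for every $N\ge 2$ (in particular for $N=d$, with no largeness assumption) and collapses to $\mu_p(V_s)=(d-1)\tmu_i$. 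You instead argue that at a generic smooth point of $\Sigma^c_i$ the transversal family is $\mu$-constant, hence topologically trivial, so $f_p$ is topologically a suspension $g(x_1,\dots,x_{n-1})$ of the transversal singularity, and Thom--Sebastiani gives $\mu(g+st^d)=(d-1)\tmu_i$. This is a legitimate alternative and has the virtue of making the absence of any largeness condition on $d$ transparent, but it is a longer and more fragile chain: you need the \emph{strong}, function-level form of topological triviality ($f\circ\Phi=g\circ\mathrm{pr}$ with $\Phi$ preserving the level sets of $l$, not merely equivalence of the zero sets), the topological invariance of the Milnor number to transport $\mu$ across the homeomorphism, and --- as you yourself note --- L\^e--Ramanujam is unavailable in transversal dimension $3$ ($n=4$), where your appeal to Massey's L\^e--Iomdine formulas is left as a sketch (it does work, since at such a point $\lambda^0=0$ and $\lambda^1=\tmu_i$ when $\Gamma_p(l,f)=\emptyset$, but this needs to be said precisely). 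The paper's route via Lemma \ref{l:yomdin} is uniform in all dimensions and avoids all three of these issues; note also that both arguments ultimately rest on the same geometric input, namely the emptiness of $\Gamma_p(l,f)$ forced by the genericity of $\cH_{gen}$.
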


\begin{proof}
By \cite{Yo}, one has the following formula for the local ``Yomdin series'' $g_{N} = g +s  l^N$ of a function germ $g: (\bC^{n},0) \to (\bC, 0)$ with 1-dimensional singular locus $\Sigma = \cup_{j}\Sigma_{j}$,  where $l$ is a general linear form, see also \cite{Si-cambridge}:

\begin{equation}\label{eq:yomdinmu}
\mu(g_{N}) = b_{n-1}(g) - b_{n-2}(g)  + N \sum_{j} d_j \tmu_j
\end{equation}
where $\mu(g_{N})$ is the Milnor number of  $g_{N}$, where $b_{n-1} (g)$ and $b_{n-2}(g)$ are the respective Betti numbers of the local Milnor fibre of $g$, and where $\tmu_j$ is the transversal Milnor number of the local branch $\Sigma_{j}$. The sum is taken over the branches $\Sigma_{j}$ of $\Sigma$,  and
 $d_j = \int_0(\Sigma_j,\{l=0\})$ is the  intersection multiplicity  of $\{l=0\}$ and $\Sigma_j$ with reduced structure.

  This formula was proved initially for ``high enough $N$'' depending on the polar ratios of the discriminant of the map $(l,g)$. It was shown in \cite{Si-series} that actually it holds for $N$ greater or equal to the highest polar ratio. Moreover, from the  proof of this formula in  \cite[pag 187]{Si-series}, one deduces the following statement:
  
\begin{lemma}\label{l:yomdin}
 If the polar locus $\Gamma(l,g)$ is empty, then formula \eqref{eq:yomdinmu} holds for any $N\ge 2$.
\fin
\end{lemma}
 %%% 

Let us remark that if $\Gamma(l,g) =\emptyset$ then $b_{n-1}(g) = 0$ and 
 $b_{n-2}(g)= \mu(g_{|l=0})$. Thus formula \eqref{eq:yomdinmu} becomes: 
\begin{equation}\label{eq:yomdinmu0}
\mu(g_{N}) =  - \mu(g_{|l=0})  + N \sum_{j} d_j \tmu_j
\end{equation}

  We will apply Lemma \ref{l:yomdin} and formula \eqref{eq:yomdinmu0} at some point $p\in H_{\gen}\cap \Sigma^c_i$,   for the following data:  $g:= f_p$, where  $f_p=0$ is a local equation of $V$ at $p$,  $N=d$ and  $g_{d} := f_s = f +s  l^d$ as defined above. 
  
  We remark that the genericity of $H_{\gen}$ with respect to $\Sing V$ implies that  the local polar locus  $\Gamma_{p}(l,f)$ is empty, and therefore Lemma \ref{l:yomdin} holds indeed for our data at $p$. Moreover, 
 since  $H_{\gen}$ is generic,  $\Sigma^c_{i}$ is smooth  at $p \in H_{gen}\cap \Sigma^c_i$, 
 thus in  formula \eqref{eq:yomdinmu0} we have a single term in the sum, a single multiplicity  $d_{i} = 1$, a single transversal Milnor number $\tmu_i$,  and  $b_{n-2}(f_p)= \tmu_i$. Therefore  formula \eqref{eq:yomdinmu0} reduces to:
\begin{equation}\label{eq:line}
\mu_p(V_s)  = (d-1) \tmu_i  .
\end{equation}
Taking the sum over all points $p\in  \Sing V_{s}$, we get:
 \[\sum_{p\in  H_{\gen}\cap \Sing V} \mu_{p}(V_{s}) = (d-1) \sum_i \tilde d_{i}\tmu_i .\]

Inserting this in the Dimca-Papadima formula \eqref{eq:polmu} for $V_{s}$,
 we get:
  \sloppy 
\begin{equation}\label{eq:polgeneric}   
\begin{array}{l}
\pol(V_s) = (d-1)^n - (d-1) \sum_i \tilde d_{i}\tmu_i  \\
 =(d-1) \big[ (d-1)^{(n-1)}- \sum_p \mu_p(V \cap \cH_{gen}) \big] 
  =(d-1) \pol ( V \cap \cH_{gen}).
\end{array}
\end{equation}
By applying now the semi-continuity result  Proposition \ref{t:polsemi-cont}, we obtain \eqref{eq:polslicing}.

From  \eqref{eq:polgeneric} we also get $\pol ( V \cap \cH_{gen})= (d-1)^{n-1} - \sum_i \tilde d_{i}\tmu_i \ge 0$, hence our 
claimed inequality \eqref{eq:condtion1dim} follows too.
\end{proof}

%\begin{remark}
%  For non-generic hyperplanes the inequality becomes closer.  {\color{red} I do not understand what do you mean here}\\
%\end{remark}

 %%%%%%%%%%%%%%%%
\section{Classification results, and questions}\label{s:examples}
%%%%%%%%%%%%%%%%%%%%%%

%%%%%%%%%%%%%% %%%%%%%%%%%%%%%%%%%%%%%%%
\subsection{Cubic surfaces}

 We list here the polar degrees of all reduced cubic surfaces, based on the classification 
done by Bruce and Wall \cite{BW} with a singularity theory approach.  In case of {\em isolated singularities}, we 
 give the number of singularities and their types. By using the Dimca-Papadima formula \eqref{eq:polmu} one gets:\\
 
 \noindent
$\pol (V) = 8$: the smooth cubic\\
$\pol (V) = 7: A_1.$\\
$\pol (V) = 6:  2A_{1} \mbox{ or } A_2$.\\
$\pol (V) = 5: 3A_1 \mbox{ or }  A_1A_2 \mbox{ or } A_3$.\\
$\pol (V) = 4: 4A_1 \mbox{ or }  A_22A_1 \mbox{ or }  A_3A_1 \mbox{ or }  2A_2 \mbox{ or }  A_4 \mbox{ or }  D_4.$\\
$\pol (V) =3: A_32A_1 \mbox{ or } A_12A_2 \mbox{ or } A_4A_1 \mbox{ or } A_5 \mbox{ or } D_5 $.\\
$\pol(V) = 2: 3A_2 \mbox{ or }  A_5A_1 \mbox{ or }  E_6$. \\
$\pol(V) = 1$: no homaloidal surfaces.\\
$\pol(V) = 0: \tilde E_6$, which is a cone.

\medskip

Next, the Bruce-Wall  classification \cite{BW} of {\em  irreducible} cubics with {\em nonisolated singularities} contains:\\

\noindent
 (CN) cone over a nodal curve, \\
 (CC) cone over a cuspidal curve\\
 
\noindent  both with $\pol (V) = 0$ because they are cones,
  and two other cases, for which we use our formula \eqref{eq:dpgen} to compute $\pol(V)$:\\

\noindent  
(E1) $x_0^2x_2 + x_1^2x_3   \; ; \; \pol(V)=2 $\\
(E2)  $x_0^2x_2 +x_0 x_1 x_3  + x_1^3\; ;  \; \pol(V) = 1$,\\

\noindent
where the singular set is a projective line with  two special points of type $D_{\infty}$ in the first case,  and a single  special point of type $J_{2,\infty}$ in the second case\footnote{For the type notation, see  \cite{Si-ILs}.}.

The irreducible surface (E2) with the  $J_{2,\infty}$ point is also mentioned in \cite[Section  3.2]{CRS} as $Y(1,2)$, related to a rational scroll surface and in \cite[Example 4.7]{CRS} as a sub-Hankel surface.

\medskip
Among the {\em reducible} cubics there are only the following three cases with non-zero polar degree:

\begin{itemize}
\item[(QP)] The union of a smooth quadratic with a general hyperplane: $\pol(V) = 2$,
\item[(QT)] The union of a smooth quadratic with a tangent hyperplane: $\pol(V) = 1, $
\item[(CP)] The union of a quadratic cone and a general hyperplane: $\pol(V)=1$.
\end {itemize}

For these computations one can use the union formula \eqref{eq:union} for the union of   two hypersurfaces $V_i\subset \bP^{n}$:

\begin{equation}\label{eq:union} \pol (V_1 \cup V_2)  = \pol(V_1) + \pol(V_2) + (-1)^n [\chi(V_1 \cap V_2 \setminus H) -1, ]
\end{equation}
where $H$ be a generic hyperplane with respect to $V$. This is  a consequence of \eqref{eq:red}
and of the inclusion-exclusion principle for the Euler-characteristic. It was observed in several papers, e.g. \cite[Cor. 4.3]{FM}.  

\smallskip

All the other reducible cubics are cones  and thus have  $\pol(V)=0$.
In particular, there are only three homaloidal cubic surfaces,  all with nonisolated singularities.
   
 %%%%%%%%%%%%%%%%%%%%%%%  
   \subsection{Homaloidal singularities with transversal type $A_1$}\label{ss:homaloidal}
A necessary condition for $V$ to be homaloidal is that   $\alpha_q(V) = 1$  for any special point $p\in V$, cf Corollary \ref{c:admissibleandspecial}.
This restriction tells for instance that all isolated singularities of $V$ must be of type $A_k$, as observed in \cite {Huh} in case of isolated singularities only, but the argument holds at any isolated singular point, cf Corollary \ref{c:admissibleandspecial}, whatever the other singularities of $V$ might be.

\smallskip
For instance, a hypersurface $V$ with 1-dimensional singular locus can be viewed (by slicing) as a 1-parameter  family
of hypersurfaces with isolated singularities. 
Therefore, a natural  classification question is: \emph{what are the 1-parameter families of hypersurfaces with isolated singularities with Milnor jumps equal to 1?} 

 It is known that in certain generic transversal types (e.g. $\tilde{E}_6$, $\tilde{E}_7$, or $\tilde{E}_8$) this jump do not exist. Let us look now to the case of the  transversal type $A_1$, more precisely the subclass of ``smooth singular locus and generic transversal type $A_1$''.

%%%%%%%%%%%%%%%%%%%%
%\subsection{Transversal $A_1$-singularities}
%%%%%%%%%%%%%%%%%%%%
%The classification of hypersurfaces $V \subset \bP^{n}$ with \emph{isolated singularities}
%and polar degree $1$ and $2$  was done in \cite{Huh} and \cite{SST}, respectively, by using the bound \eqref{eq:huhbound}.  %For instance one of the ingredients used by Huh in the isolated singularity case \cite{Huh}  is the statement:  \emph{If the %hyperplane  $\cH$ is general at $p$, then $\mu(V\cap \cH, p)=1$ if and only if $V$ is of type  $A_k$ at $p$}.
%this was said just before

\begin{proposition}
Let $V \subset \bP^{n}$ be a hypersurface with singular locus $\Sing (V) = \Sigma^{c}\sqcup \Sigma^{\isol}$ such that $\Sigma^{c}$ is a smooth projective line with transversal type $A_1$, and $\Sigma^{\isol}$ is a finite set of isolated points. 

 Then
 $\alpha_{p}(V)=1$ if and only if the hypersurface germ $(V, p)$  is a $J_{k,\infty}$ singularity, i.e has local equation: 
$y^2 (y-x^k) + z_1^2 + \cdots+ z_{n-2}^2= 0$.

In particular, if $V$ is homaloidal, then all its  special  points  on $\Sigma^{c}$ must be of type $J_{k,\infty}$, and all its isolated singular points  of type  $A_k$.
\end{proposition}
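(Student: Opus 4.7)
The plan is to translate the condition $\alpha_p(V)=1$ into a property of the plane curve germ associated to $(V,p)$ via a transverse-Morse normal form, and then to classify such germs. First, since $\Sigma^{c}$ is smooth at $p$ with a single local branch of multiplicity $1$ and generic transverse type $A_1$ (so $\mu^{\pitchfork}=1$), formula \eqref{eq:mdefect2} reduces to $\alpha_p(V)=\mu(V\cap\cH_{gen},p)-1$. Hence $\alpha_p(V)=1$ is equivalent to $\mu(V\cap\cH_{gen},p)=2$. A Hessian corank computation (the kernel of the Hessian of a local equation $f_p$ at $p$ has dimension exactly $2$, one direction along $\Sigma^{c}$ and one transverse) rules out generic sections of corank $\geq 2$, so this section is forced to be an $A_2$ singularity.

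Next I would choose local affine coordinates $(t,y,z_1,\ldots,z_{n-2})$ at $p$ in which $\Sigma^{c}$ is the $t$-axis, and apply the parametrised Morse splitting lemma in the $z$-directions. This yields
\[
f_p(t,y,z)=y^{2}h(t,y)+z_1^{2}+\cdots+z_{n-2}^{2},
\]
with $h\in\bC\{t,y\}$ satisfying $h(t,0)\not\equiv 0$ (generic transverse $A_1$) and $h(0,0)=0$ ($p$ special). Restricting $f_p$ to $\cH_{gen}=\{t=\lambda y\}$ for generic $\lambda$ and splitting off the $z_i^{2}$'s gives $\mu(V\cap\cH_{gen},p)=1+\mathrm{ord}_0 h$, where $\mathrm{ord}_0 h$ is the order of $h$ at the origin of $\bC^2$. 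Thus $\alpha_p(V)=1$ if and only if $\mathrm{ord}_0 h=1$, i.e.\ the plane curve germ $C:=\{h=0\}$ is smooth at the origin.

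The third step classifies such smooth germs $C$ (distinct from $\{y=0\}$, since $h(t,0)\not\equiv 0$) up to axis-preserving $(t,y)$-coordinate changes. The pair $(C,\{y=0\})$ has a single discrete invariant, the intersection multiplicity $k:=(C\cdot\{y=0\})_0\geq 1$, equal to the order of $h(t,0)$ at $t=0$. After a change $t\mapsto t^{*}(t,y)$ that linearises $\varphi$, where $h=(y-\varphi(t))\cdot u(t,y)$ is the Weierstrass factorisation, followed by a $t$-dependent rescaling $y\mapsto y\cdot u(t,y)^{1/3}$ that absorbs the unit, the germ $h$ becomes $y-t^{k}$, so $f_p\sim y^{2}(y-t^{k})+\sum z_i^{2}=J_{k,\infty}$. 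The converse is immediate from $\mathrm{ord}_0(y-t^{k})=1$.

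For the homaloidal consequence, Corollary~\ref{c:admissibleandspecial} applied to $V$ (which is not a cone since $\pol(V)=1$) gives $\alpha_q(V)\leq 1$ at every point of $V_{\spec}$, and Huh's bound \eqref{eq:huhbound} gives $\alpha_q(V)\geq 1$ at every singular point, so every singular $q$ has $\alpha_q(V)=1$. At $q\in\Sigma^{c}$ the equivalence established above yields $(V,q)\sim J_{k,\infty}$, while at $q\in\Sigma^{\isol}$ the equality $\mu(V\cap\cH_{gen},q)=1$ forces the corank of $(V,q)$ to be at most $1$ (a corank-$c$ isolated singularity has generic section of corank $c-1$), hence $(V,q)\sim A_k$ for some $k\geq 1$. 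The main obstacle will be the third step: the reduction of $h$ to the exact normal form $y-t^{k}$ by axis-preserving changes requires careful bookkeeping of the Weierstrass unit $u$ and its simultaneous absorption into the $(t,y)$-rescaling so that the product $y^{2}h$ becomes the standard $J_{k,\infty}$ equation; this is precisely the algebraic core of Siersma's classification of line singularities with transverse type $A_1$ in \cite{Si-ILs}.
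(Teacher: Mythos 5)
Your proof is correct and follows essentially the same route as the paper: reduce $\alpha_p(V)=1$ via \eqref{eq:mdefect} to the generic hyperplane section being an $A_2$, then invoke the classification of isolated line singularities of \cite{Si-ILs} for the $J_{k,\infty}$ normal form (which you partially re-derive with the splitting lemma and Weierstrass factorisation), and deduce the homaloidal consequence from Corollary \ref{c:admissibleandspecial}. One small slip: the lower bound $\alpha_q(V)\ge 1$ at the relevant points does not come from \eqref{eq:huhbound} but simply from the definition of a special point (resp.\ the positivity of $\mu_q(V\cap\cH)$ at an isolated singular point); this does not affect the argument.
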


\begin{proof}

The generic transversal type is  $A_{1}$ by hypothesis,  with transversal Milnor number  $\mu^{\pitchfork}=1$. Then by \eqref{eq:mdefect} the condition $\alpha_{p}(V, H)=1$ is equivalent to $\mu (V \cap \cH,p) = 2$, which means that  the hypersurface germ $(V \cap \cH,p)$ is an $A_2$-singularity. One may then apply the local classification of line singularities  \cite{Si-ILs} and obtain, firstly  that the local function defining $(V, p)$ is $\cR$-equivalent to 
$y^2 g(x,y)  + z_1^2 + \cdots+ z_{n-2}^2$, with $g(0,0)=0$,  and secondly that $ y^2 g(x,y$) is $\cR$-equivalent to $y^2 (y-x^k)$ for some $k$. This shows ou first claim.

To show our second claim, we  apply Corollary \ref{c:admissibleandspecial} as in the first paragraph of this subsection,  together with the result that we have just proved.
\end{proof}

%%%%%%%%%%%%%%%%%%%%%%
\subsection{On projective cones and other hypersurfaces with polar degree zero}\label{ss:notcone}
%%%%%%%%%%%%%%%%%%%%%%
Let $V \subset \bP^n$ be defined by $f(x_0,x_1,\cdots, x_n) = g(x_1,\cdots, x_n) =0$. This is one of the possible definitions of a projective cone $V$ over the hypersurface $V' \subset \bP^{n-1}$  given by $g(x_1,\cdots, x_n) =0$.   The point $q= [1:0: \cdots :0]$ is called the apex  of the cone. From the definiton \eqref{eq:grad} it follows that $\pol(V)=0$.\\
Let $V'$ have isolated singularities only, with Milnor numbers $\mu_1,\cdots,\mu_r$. Then $\Sing(V) = \cup_i^r \Sigma^c_i$, where each $\Sigma^c_i$ is a projective line with transversal type $\mu_i$, and such that all these lines intersect only at the apex $q$.

Let us compute $\pol(V)$ with formula \eqref{eq:dpgen}.  First observe that $g_i=0$, $\gamma_i=1$ and $\deg \Sigma^c_i=1$. There are no isolated critical points. The only special point is the apex $q$, where a local affine equation is given by $g=0$.
As  computed in \cite{Si-cambridge}, one has  $\chi (\cA_q)= (-1)^n ((d-1)^n - d \sum_i \tmu_i )$.   We therefore get $\pol(V) = (d-1)^n - d\sum_i \tmu_i - (d-1)^n + d\sum_i \tmu_i = 0$, which is indeed what was expected since $V$ is a cone.  

\smallskip

What can be said about other hypersurfaces with polar degree zero?  Historically, Hesse claimed that hypersurfaces with vanishing Hessian are always projective  cones. In 1875 Gordan and Noether \cite{GN76} gave several examples with polar degree zero but not cones. All known examples seem to have a singular locus of dimension at least $2$. It follows from 
the lower bound results \cite{ST-polcycles} that the polar degree zero can not occur  if $V$ contains isolated singularities. The following  question is open:

\smallskip

\emph{Do there exist hypersurfaces with 1-dimensional critical set and polar degree 0?}

\smallskip
We have the following  partial result, by \cite[Corollary 6.9]{ST-polcycles}: \emph{Let  $V$ be not a cone and $\pol(V) =0$. Then  $V$ has no special point. In particular $V$ has no isolated singularity (besides its non-isolated singularities).}

%%%%%%%%%%

\end{document}